\documentclass[a4paper,11pt]{article}%
\usepackage{amssymb}
\usepackage{amsmath}
\usepackage{amsfonts}
\usepackage{graphicx}%
\setcounter{MaxMatrixCols}{30}
\providecommand{\U}[1]{\protect\rule{.1in}{.1in}}
\providecommand{\U}[1]{\protect\rule{.1in}{.1in}}
\providecommand{\U}[1]{\protect\rule{.1in}{.1in}}
\newtheorem{theorem}{Theorem}

\newtheorem{corollary}[theorem]{Corollary}

\newenvironment{proof}[1][Proof]{\noindent\textbf{#1.} }{\ \rule{0.5em}{0.5em}}
\begin{document}

\title{The double of the doubles of Klein surfaces}
\author{Antonio F. Costa, Paola Cristofori and Ana M. Porto}
\maketitle

\textit{AMS Subject classification:}\ Primary 30F50. Secondary 14H15.

\textit{Key words and phrases:} Klein surface, Riemann surface, automorphism,
real algebraic curve, moduli space.

\textbf{Abstract.} A Klein surface is a surface with a dianalytic structure. A
double of a Klein surface $X$ is a Klein surface $Y$ such that there is a
degree two morphism (of Klein surfaces) $Y\rightarrow X$. There are many
doubles of a given Klein surface and among them the so-called natural doubles
which are: the complex double, the Schottky double and the orienting double
(see \cite{AG}, \cite{CHS}). We prove that if $X$ is a non-orientable Klein
surface with non-empty boundary, the three natural doubles, although distinct
Klein surfaces, share a common double: \textquotedblleft the double of
doubles\textquotedblright\ denoted by $DX$. We describe how to use the double
of doubles in the study of both moduli spaces and automorphisms of Klein
surfaces. Furthermore, we show that the morphism from $DX$ to $X$ is not given
by the action of an isometry group on classical surfaces.

\section{Introduction}

A (compact) Klein surface is a surface with a dianalytic structure, i. e. a
surface where the charts are defined on open sets of the upper-half complex
plane ${\mathcal{U}}$ and the transition functions are analytic or
anti-analytic (see \cite{AG}, \cite{BEGG} or \cite{N}). Topologically compact
Klein surfaces may be non-orientable and with boundary. The folding map
$\phi:{\mathbb{C}}\longrightarrow\mathcal{U}$, is defined by $\phi
(x+iy)=x+i|y|$, and a smooth morphism of Klein surfaces is a map which is
either locally complex smooth or locally the folding map, the latter occurring
over the boundary of the image (for a more precise definition see \cite{AG}).

A double of a Klein surface $X$ is a Klein surface $Y$ such that there is a
degree two morphism $Y\rightarrow X$. Three types of doubles, the so-called
natural doubles, turn out to be historically interesting: the complex double,
the Schottky double and the orienting double; they are defined in \cite{AG} in
terms of equivalence classes of dianalytic atlases. In \cite{CHS} the doubles
of Klein surfaces are studied by using subgroups of uniformizing Euclidean and
non-Euclidean crystallographic groups.

If $X$ is a non-orientable Klein surface with non-empty boundary, we prove
that the three natural doubles, although distinct Klein surfaces, share a
common double: \textquotedblleft the double of doubles\textquotedblright. The
main purpose of this paper is the study of this Riemann surface. We first
establish the relations between each natural double with the double of doubles
(section 5) and we apply this concept to the study of automorphisms of Klein
surfaces (section 7) and to the theory of real algebraic curves (section 8).
In section 6 it is shown that the morphism from the double of doubles to the
given Klein surface cannot be visualized as the natural projection on the
space of orbits produced by action of an isometry group on classical surfaces.

This article has been motivated by a question of Gareth Jones after a talk by
D. Singerman on \cite{CHS} in the Conference in honour of E. Bujalance in
Link\"{o}ping (2013).

\textbf{Aknowledgments}. This work was partially supported by MTM2014-55812
(Spain)\ and GNSAGA-INdAM (Italy).

\bigskip

\section{Klein surfaces and NEC groups}

The algebraic genus of a Klein surface $X$ of genus $g$ with $k$ boundary
components is, by definition, $2g+k-1$ if $X$ is orientable and $g+k-1$ if $X$
is non-orientable. The algebraic genus is the topological genus of the complex
double of $X$ (see section 4).

Every Klein surface has uniformization $\mathcal{S}/\Gamma$ where
$\mathcal{S}$ is a simply-connected Riemann surface and $\Gamma$ is a
crystallographic group without elliptic elements (it might have reflections
though). If the algebraic genus of the surface is greater than $1$, then
$\mathcal{S}=\mathcal{U}$, the upper complex half-plane, and $\Gamma$ is a
(planar) non-Euclidean crystallographic (NEC) group. If the algebraic genus is
equal to $1$ (for example the M\"{o}bius band) then $\mathcal{S}=\mathbb{C}$
and $\Gamma$ is a (planar) Euclidean crystallographic group. These groups are
called surface Euclidean or non-Euclidean crystallographic groups and have
assigned a signature of the form (see \cite{BEGG} and \cite{S})%
\begin{equation}
(g;\pm;[-];\{(-)^{k}\}). \label{signature}%
\end{equation}
Here, $(-)^{k}$ means $k$ empty period cycles. If this occurs, $\mathcal{S}%
/\Gamma$ is a compact surface of genus $g$ with $k$ boundary components; it is
orientable when the $+$ sign occurs and non-orientable otherwise. The group
$\Gamma$ has a fundamental region that is a Euclidean or hyperbolic polygon
$\mathcal{P}$. If the $+$ sign occurs then the fundamental region for the
group is a hyperbolic polygon with surface symbol%

\begin{equation}
\alpha_{1}\beta_{1}\alpha_{1}^{\prime}\beta_{1}^{\prime}\ldots,\,\alpha
_{g}\beta_{g}\alpha_{g}^{\prime}\beta_{g}^{\prime}\epsilon_{1}\gamma
_{1}\epsilon_{1}^{\prime}\ldots,\,\epsilon_{k}\gamma_{k}\epsilon_{k}^{\prime}
\label{surfacesymbol+}%
\end{equation}
If the $-$ sign occurs then the fundamental polygon has surface symbol
\begin{equation}
\alpha_{1}\alpha_{1}^{\ast}\ldots\alpha_{g}\alpha_{g}^{\ast}\epsilon_{1}%
\gamma_{1}\epsilon_{1}^{\prime}\ldots\epsilon_{k}\gamma_{k}\epsilon
_{k}^{\prime} \label{surfacesymbol-}%
\end{equation}

The group has two possible presentations; if the $+$ sign occurs the
presentation is%
\begin{align*}
\langle a_{1},b_{1},\ldots,a_{g},b_{g},e_{1},\ldots,e_{k},c_{1},\ldots,c_{k}
&  \mid\\
\Pi_{i=1}^{g}[a_{1},b_{i}]e_{1}\cdots e_{k}  &  =1,c_{i}^{2}=1,e_{i}c_{i}%
e_{i}^{-1}=c_{i}\text{ \ \ }(i=1,\ldots,k)\rangle
\end{align*}

Here $a_{i},b_{i}$ are translations or hyperbolic, $c_{i}$ are reflections and
$e_{i}$ are orientation-preserving though usually hyperbolic. Moreover
$a_{i}(\alpha_{i}^{\prime})=\alpha_{i},b_{i}(\beta_{i}^{^{\prime}})=\beta
_{i},e_{i}(\epsilon_{i}^{\prime})=\epsilon_{i}$ and $c_{i}$ fixes the edge
$\gamma_{i}$.

If the $-$ sign occurs the presentation is%
\begin{align*}
\langle d_{1}\ldots,d_{g},e_{1},\ldots,e_{k},c_{1},\ldots,c_{k}  &  \mid\\
d_{1}^{2}\cdots d_{g}^{2}e_{1}\cdots e_{k}  &  =1,c_{i}^{2}=1,e_{i}c_{i}%
e_{i}^{-1}=c_{i}\quad(i=1,\ldots,k)\rangle
\end{align*}

Here $d_{i}$ are glide-reflections and $d_{i}(\alpha_{i}^{\ast})=\alpha_{i}$

This type of presentations of Euclidean or NEC groups will be called
\textit{canonical presentation} and a generator will be a \textit{canonical
generator} and in both presentations the first relation is called the
\textit{long} relation.

\bigskip

\section{Standard epimorphisms of NEC\ groups and doubles of Klein surfaces}

A double of a Klein surface $X=\mathcal{S}/\Gamma$ has the form $\mathcal{S}%
/\Lambda=Y$ where $\Lambda$ is a surface subgroup of index 2 in $\Gamma$, then
there is an epimorphism $\theta:\Gamma\longrightarrow C_{2}=\langle t\mid
t^{2}=1\rangle$, with $\ker\theta=\Lambda$, called the monodromy epimorphism.

A Klein surface may have a large number of doubles (see Theorem 1 of
\cite{CHS} and \cite{H}). For this reason we focus our study on the most
important ones mentioned in \cite{AG} and \cite{CHS}.

Lets us gather the canonical generators of $\Gamma$ in sets and define:
$E=\{e_{1},\ldots,e_{k}\}$, $C=\{c_{1},\ldots,c_{k}\}$, $A=\{a_{1}%
,b_{1},\ldots,a_{g},b_{g}\}$ or $A=\{d_{1},\ldots,d_{g}\}$. We will consider
only the doubles whose monodromies $\theta:\Gamma\longrightarrow C_{2}$ are
constant on each set of generators. An epimorphism with this property is
called a standard epimorphism.

\begin{theorem}
If $k$ is even then there are 7 standard epimorphisms $\theta:\Gamma
\longrightarrow C_{2}$, while if $k$ is odd there are only 3 standard epimorphisms.
\end{theorem}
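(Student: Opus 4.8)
The plan is to exploit the fact that a standard epimorphism $\theta:\Gamma\to C_2$ is completely determined by the three values it takes on the generating sets $A$, $E$ and $C$, so there are at most $2^{3}=8$ candidate maps; the task then reduces to deciding which of these respect the defining relations and which are surjective. First I would write $C_2$ additively as $\mathbb{Z}/2$ and, using the hypothesis that $\theta$ is constant on each generating set, set $\theta(A)=\alpha$, $\theta(E)=\varepsilon$, $\theta(C)=\gamma$ with $\alpha,\varepsilon,\gamma\in\{0,1\}$. Since a map defined on the generators extends to a homomorphism exactly when every defining relator is sent to $0$, I would then run through the relations of the canonical presentation one at a time in both the $+$ and $-$ cases.

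The key observation is that $C_2$ is abelian, which renders almost every relation vacuous. The conjugation relation $e_ic_ie_i^{-1}=c_i$ becomes $\varepsilon+\gamma-\varepsilon=\gamma$, an identity; the relation $c_i^{2}=1$ becomes $2\gamma=0$, again automatic; in the $+$ presentation each commutator $[a_i,b_i]$ maps to $0$, and in the $-$ presentation each square $d_i^{2}$ maps to $2\alpha=0$. Hence the only relation capable of constraining $\theta$ is the long relation, whose image is $k\varepsilon$ in both presentations, the $A$-part contributing nothing. I conclude that $\theta$ extends to a homomorphism if and only if $k\varepsilon=0$ in $\mathbb{Z}/2$, and that surjectivity is equivalent to $\theta$ being nonzero, because the image of $\theta$ is a subgroup of $C_2$ and so equals $C_2$ as soon as a single generator maps to $t$.

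From here the count is immediate and splits by the parity of $k$. If $k$ is even then $k\varepsilon=0$ holds for every $\varepsilon$, so all $8$ triples $(\alpha,\varepsilon,\gamma)$ yield homomorphisms; discarding only the trivial triple $(0,0,0)$ leaves $7$ epimorphisms. If $k$ is odd then $k\varepsilon=\varepsilon$, forcing $\varepsilon=0$, so only the $4$ triples with $\varepsilon=0$ survive, and removing $(0,0,0)$ leaves $3$ epimorphisms.

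I do not anticipate a genuine obstacle: the whole argument is a finite bookkeeping of relator images, and the one point deserving care is the computation of the image of the long relation, where one must notice that the abelianness of $C_2$ annihilates the $A$-contribution in both signatures, so that the parity of $k$ enters solely through the factor $e_1\cdots e_k$. Stating the surjectivity criterion explicitly is also worthwhile, since it is exactly what converts the homomorphism count into the claimed epimorphism count.
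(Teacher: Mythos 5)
Your proof is correct and is essentially the argument the paper points to: the paper itself gives no proof of this theorem, deferring to \cite{CHS}, where the count is obtained by exactly this bookkeeping --- since $C_{2}$ is abelian, every relation except the long one is vacuous, the long relation contributes $k\varepsilon$ in both the $+$ and $-$ presentations, and so the parity of $k$ decides between $2^{3}-1=7$ and $2^{2}-1=3$ nontrivial (hence surjective) assignments. Your observation that surjectivity onto $C_{2}$ is automatic for any nonzero triple, and that the $A$-part dies in the long relation in both signatures (commutators in the $+$ case, squares $d_{i}^{2}$ in the $-$ case), is precisely the content of the omitted proof; the only implicit hypothesis worth stating is that $A$, $E$, $C$ are all nonempty (i.e.\ $g\geq 1$ and $k\geq 1$), so that the eight triples $(\alpha,\varepsilon,\gamma)$ really give distinct maps.
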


Proof in \cite{CHS}.

\bigskip

In \ref{tab1} we have listed the standard epimorphisms and the corresponding
topological type of the double $\mathcal{U}/\ker\theta$, see \cite{CHS}. We
distinguish between the cases where $\Gamma$ has orientable or non-orientable
quotient space. Here, $k$ is the number of boundary components of
$\mathcal{U}/\Gamma$, $B$ is the number of boundary components of the double
$\mathcal{U}/\ker\theta$ and the orientability of $\mathcal{U}/\ker\theta$ is
denoted by $+$ or $-$.%

\begin{equation}%
\begin{tabular}
[c]{lllll}\hline
& Standard epimorphism & Boundary & \multicolumn{2}{c}{Orientability of the
double $\mathcal{U}/\ker\theta$}\\
& $\theta$ & $B$ & $\mathcal{U}/\Gamma$ non-orientable & $\mathcal{U}/\Gamma$
orientable\\\hline\hline
1. & \multicolumn{1}{c}{$E\rightarrow\{1\}$ $C\rightarrow\{t\}$ $A\rightarrow
\{t\}$} & $0$ & \multicolumn{1}{c}{$+$} & \multicolumn{1}{c}{$-$}\\
2. & \multicolumn{1}{c}{$E\rightarrow\{1\}$ $C\rightarrow\{1\}$ $A\rightarrow
\{t\}$} & $2k$ & \multicolumn{1}{c}{$+$} & \multicolumn{1}{c}{$+$}\\
3. & \multicolumn{1}{c}{$E\rightarrow\{1\}$ $C\rightarrow\{t\}$ $A\rightarrow
\{1\}$} & $0$ & \multicolumn{1}{c}{$-$} & \multicolumn{1}{c}{$+$}\\
4. & \multicolumn{1}{c}{$E\rightarrow\{t\}$ $C\rightarrow\{1\}$ $A\rightarrow
\{1\}$} & $k$ & \multicolumn{1}{c}{$-$} & \multicolumn{1}{c}{$+$}\\
5. & \multicolumn{1}{c}{$E\rightarrow\{t\}$ $C\rightarrow\{1\}$ $A\rightarrow
\{t\}$} & $k$ & \multicolumn{1}{c}{$-$} & \multicolumn{1}{c}{$+$}\\
6. & \multicolumn{1}{c}{$E\rightarrow\{t\}$ $C\rightarrow\{t\}$ $A\rightarrow
\{1\}$} & $0$ & \multicolumn{1}{c}{$-$} & \multicolumn{1}{c}{$-$}\\
7. & \multicolumn{1}{c}{$E\rightarrow\{t\}$ $C\rightarrow\{t\}$ $A\rightarrow
\{t\}$} & $0$ & \multicolumn{1}{c}{$-$} & \multicolumn{1}{c}{$-$}\\\hline
\end{tabular}
\ \ \ \ \ \tag{Table 1}\label{tab1}%
\end{equation}

The three first rows describe the monodromies of the so-called \textit{natural
doubles}, which are the most important from the historical point of view (see
section 4 and \cite{CHS}).

\bigskip

\section{The natural doubles}

Let $X=\mathcal{U}/\Gamma$, where $\Gamma$ is a crystallographic surface group.

\begin{enumerate}
\item The complex double:
\end{enumerate}

If $X$ is a Klein surface then its complex double $X^{+}$ is the unique double
which is a Riemann surface without boundary. The complex double of
$X=\mathcal{U}/\Gamma$ is $\mathcal{U}/\Gamma^{+}$ where $\Gamma^{+}$ is the
subgroup of $\Gamma$ consisting of those transformations preserving
orientation. If $X$ is non-orientable then the generators of $A$ are glide
reflections and so the complex double is given by epimorphism 1; it
corresponds to epimorphism 3 when $X$ is orientable. The genus of the complex
double $X^{+\text{ }}$is the algebraic genus of the Klein surface $X$.

\begin{enumerate}
\item[2.] The orienting double.
\end{enumerate}

Let $X$ be a Klein surface and suppose that $\partial X$ has $k$ components.
For each $i=1,...,k$ fill in each boundary component with a disc $D_{i}$. We
get a surface $\hat{X}$ without boundary with the same orientability as $X$.
Now consider the complex double of $\hat{X}$. Let $D_{i}^{1}$ and $D_{i}^{2}$
be the lifts of $D_{i}$ to $\hat{X}$. If we remove these discs from $\hat{X}$
we end up with an orientable surface $OX$ which has $2k$ boundary components
and clearly $OX$ is an unbranched two-sheeted covering of $X$. We call $OX$
the orienting double of $X$. Note that if $X$ is orientable then $OX$ has two
connected components.

If we consider the epimorphisms of \ref{tab1} we see that we only have a
covering with twice as many boundary components as the original surface for
epimorphism 2; so this epimorphism corresponds to the orienting double of a
non-orientable Klein surface. In the case of orientable Klein surfaces the
orienting double consists of two copies of the original surface. If the
surface $X$ is non-orientable with empty boundary, the orienting double
coincides with the complex double.

\begin{enumerate}
\item[3.] The Schottky double
\end{enumerate}

Let $Y$ be a double of the Klein surface $X$. Then $Y$ admits an involution
$h\in\Gamma$ such that ${Y}/\langle h\rangle=X.$ As we are considering
unbranched but possibly folded coverings, the fixed-point set of $h$ will
include a collection of simple closed curves (see for instance \cite{BCNS}).
We define the Schottky double of $X$ to be a Klein surface $SX$ without
boundary with the same orientability as $X$ admitting a dianalytic involution
$h$ whose fixed curves separate $SX$ and such that $SX/\langle h\rangle=X$.

\begin{theorem}
[\cite{CHS}]Let $X=\mathcal{U}/\Gamma$ be a Klein surface with boundary and
$SX=\mathcal{U}/\Lambda$ its Schottky double, where $\Gamma$ and $\Lambda$ are
crystallographic surface groups. Let $\theta:\Gamma\longrightarrow
\Gamma/\Lambda\cong C_{2}$ be the natural epimorphism. Then $\theta$ is the
epimorphism 3 of \ref{tab1}.
\end{theorem}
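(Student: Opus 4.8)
The plan is to identify the Schottky double among the seven standard epimorphisms by matching the two defining properties: $SX$ is a surface \emph{without boundary} having the \emph{same orientability} as $X$, and the covering involution $h$ has fixed curves that \emph{separate} $SX$. First I would note that the first property eliminates all epimorphisms producing $B\neq 0$; inspecting \ref{tab1}, the boundary count $B$ vanishes only for epimorphisms $1,3,6,7$. So the Schottky double must be one of these four. The second property—that $SX$ shares the orientability of $X$—then forces the choice depending on the orientability of $\mathcal{U}/\Gamma$: reading off the last two columns of \ref{tab1}, when $X$ is non-orientable the sign of the double is $-$ precisely for epimorphisms $3,6,7$, and when $X$ is orientable it is $+$ precisely for epimorphism $3$ (and $2$, already excluded). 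Thus epimorphism $3$ is the unique candidate that simultaneously gives no boundary and preserves orientability in \emph{both} the orientable and non-orientable cases, which already singles it out; the real work is to confirm that its covering involution has \emph{separating} fixed curves, distinguishing it from epimorphisms $6$ and $7$ in the non-orientable case.

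The key step, therefore, is to analyze the fixed-point set of the covering involution $h$ determined by each candidate epimorphism. The involution $h$ corresponds to the nontrivial coset of $\Lambda=\ker\theta$ in $\Gamma$, and its fixed curves on $SX=\mathcal{U}/\Lambda$ arise from the reflections of $\Gamma$, i.e.\ the canonical generators $c_i$. A reflection $c_i$ descends to a fixed curve of $h$ exactly when $\theta(c_i)=t$, so that $c_i\notin\Lambda$ but $c_i$ normalizes $\Lambda$ and acts as $h$ on the corresponding boundary component's preimage. For epimorphism $3$ we have $C\rightarrow\{t\}$, $E\rightarrow\{1\}$, $A\rightarrow\{1\}$, so every reflection lifts to a fixed curve of $h$, and I would check that these $k$ curves are exactly the images of the $k$ boundary circles of $X$, glued into closed curves in the doubled surface.

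To verify that these fixed curves \emph{separate} $SX$, I would use the homological (or cut-and-paste) description of the double. Cutting $SX$ along the fixed curves of $h$ recovers two copies of the fundamental region of $\Gamma$ identified along the $\gamma_i$ edges; since $\theta$ is trivial on $E$ and $A$, the two sheets over $X$ are interchanged only across the reflection curves, so removing exactly those curves disconnects the covering into the two sheets. This is the hallmark of the Schottky double: the double is obtained by reflecting $X$ across its boundary, with the boundary becoming a separating fixed curve. By contrast, for epimorphisms $6$ and $7$ the monodromy is nontrivial on $E$ (and on $A$ in case $7$), so the two sheets remain connected through the $e_i$ or $a_i$ generators even after cutting along the reflection curves; hence their fixed curves are \emph{non-separating}, disqualifying them.

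The main obstacle I anticipate is making the separating-versus-nonseparating distinction fully rigorous, since it is a global topological property of the covering rather than something read directly from the table. I would address this either by an explicit Euler-characteristic and connected-component count of $SX$ cut along the fixed curves, or—more cleanly—by translating separation into the condition that the classes of the fixed curves are trivial in $H_1(SX;\mathbb{Z}/2)$, equivalently that the reflection generators $c_i$ lie in the kernel of the composite $\Gamma\rightarrow H_1$ relevant to the cut. Once separation is confirmed for epimorphism $3$ and refuted for $6,7$, uniqueness of the Schottky double (guaranteed by its defining properties) forces $\theta$ to be epimorphism $3$, completing the argument.
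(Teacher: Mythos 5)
The paper itself gives no proof of this statement: it is quoted from \cite{CHS} and the text immediately moves on, so there is no internal argument to compare against. Judged on its own, your strategy is the natural one and its core is sound: use the boundary column of \ref{tab1} to discard epimorphisms with $B\neq 0$, and then use the separating property of $Fix(h)$ to rule out epimorphisms 6 and 7; your cut-and-paste description (the restriction $SX-Fix(h)\rightarrow X-\partial X$ is an unbranched double covering, disconnected exactly when the monodromy is trivial on the generators in $E\cup A$) is exactly the right mechanism. Two points, however, need repair. First, a structural one: your elimination runs only over the seven \emph{standard} epimorphisms, and your closing appeal to ``uniqueness of the Schottky double (guaranteed by its defining properties)'' is both unproven and not what is needed. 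A priori the monodromy $\theta$ of a double satisfying the Schottky properties need not be constant on the sets $E$, $C$, $A$, so one may not simply inspect \ref{tab1}. The fix is already contained in your own third paragraph: boundarylessness of $SX$ forces $\theta(c)=t$ for \emph{every} reflection $c\in\Gamma$ (a reflection in $\ker\theta$ would produce a boundary component of $SX$), and separation of $Fix(h)$ forces $\theta$ to be trivial on all of $E\cup A$, since the covering $SX-Fix(h)\rightarrow X-\partial X$ has monodromy given by the restriction of $\theta$ to $\pi_{1}(X-\partial X)$, which is carried by the images of $E\cup A$. This computes $\theta$ on every canonical generator directly, establishing standardness and identifying epimorphism 3 in one stroke; the orientability matching then becomes a consistency check rather than a needed step. (Relatedly, your parenthetical ``$+$ precisely for epimorphism 3 (and 2, already excluded)'' overlooks that 4 and 5 also give $+$ for orientable $X$; harmless, since those too have $B\neq 0$.)

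Second, the homological shortcut you offer as the ``cleaner'' rigorization is false as stated: triviality of the classes of the \emph{individual} fixed curves in $H_{1}(SX;\mathbb{Z}_{2})$ is not equivalent to their union separating. Already for $X$ an annulus, $SX$ is a torus and $Fix(h)$ consists of two parallel essential circles, each non-trivial in mod 2 homology, whose union nevertheless separates $SX$ into two annuli; so the fixed curves of a Schottky double are in general homologically essential one by one. Any homological reformulation must involve the total class of $Fix(h)$ and is delicate when $SX$ is non-orientable, so the rigorous route should be the covering-space/monodromy argument (or the explicit component count after cutting), which you also propose and which does work. With the first repair made and the homological remark deleted, your proof is correct and is, in substance, the argument one expects to find in \cite{CHS}.
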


Note that if $X=\mathcal{U}/\Gamma$ is orientable, the Schottky double
coincides with the complex double. If $X$ is non-orientable without boundary,
the Schottky double has two connected components both isomorphic to $X$.

\bigskip

\section{The double of the natural doubles}

Let $X$ be a non-orientable Klein surface with non-empty boundary. As shown in
the above section, $X$ has three different natural doubles: $X^{+},OX$ and
$SX$. The surfaces $OX$ and $SX$ are, in general, proper Klein surfaces (i.e.
non-orientable or bordered Klein surfaces). Note that $(OX)^{+}=S(OX)$ since
$OX$ is orientable and $(SX)^{+}=O(SX)$ because $SX$ has no boundary. The
following results establish that $(OX)^{+}=S(OX)=(SX)^{+}=O(SX)$ as well; this
is the Riemann surface that we shall call the double of (the natural) doubles,
and denote by $DX$.

\begin{theorem}
\label{Main}Let $X=\mathcal{U}/\Gamma$ be a non-orientable Klein surface with
non-empty boundary. Let $SX$ be the Schottky double, $OX$ be the orienting
double and $X^{+}$ be the complex double of $X$. There exists a Riemann
surface $DX$ such that $\mathrm{Aut}^{\pm}(DX)$ contains a group $\left\langle
s,t\right\rangle $ isomorphic to $C_{2}\times C_{2}$ and such that:
$DX/\left\langle s\right\rangle =OX$ is the orienting double, $DX/\left\langle
t\right\rangle =SX$ is the Schottky double of $X$, and $DX/\left\langle
st\right\rangle =X^{+}$ is the complex double.
\end{theorem}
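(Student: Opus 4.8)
The plan is to work entirely inside the group $\Gamma$ and to build $DX$ as $\mathcal{U}/\Delta$ for a suitable common normal subgroup $\Delta \trianglelefteq \Gamma$. Since $X$ is non-orientable with non-empty boundary, the three natural doubles are given (by Table~\ref{tab1} and the discussion in section~4) by standard epimorphisms onto $C_2$: the complex double $X^{+}$ corresponds to epimorphism~1 (call its kernel $\Gamma^{+}$), the orienting double $OX$ to epimorphism~2 (kernel $\Gamma_{O}$), and the Schottky double $SX$ to epimorphism~3 (kernel $\Gamma_{S}$). The first step is to record these three epimorphisms explicitly on the generator sets $A$, $C$, $E$:
\begin{align*}
\theta_{1}&:\; A\mapsto t,\; C\mapsto t,\; E\mapsto 1 \quad (\text{complex double } X^{+}),\\
\theta_{2}&:\; A\mapsto t,\; C\mapsto 1,\; E\mapsto 1 \quad (\text{orienting double } OX),\\
\theta_{3}&:\; A\mapsto 1,\; C\mapsto t,\; E\mapsto 1 \quad (\text{Schottky double } SX).
\end{align*}
The key observation is that $\theta_{1}=\theta_{2}\cdot\theta_{3}$ pointwise (their product in $C_2$), so these three are precisely the three nontrivial homomorphisms $\Gamma\to C_2$ that factor through the single homomorphism $\Theta:\Gamma\to C_2\times C_2$ sending $A\mapsto(1,1)$-type data appropriately, namely $\Theta=(\theta_{2},\theta_{3})$.

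Concretely, I would define $\Theta:\Gamma\longrightarrow C_2\times C_2=\langle s,t\mid s^2=t^2=1,\,[s,t]=1\rangle$ by $A\mapsto s$, $C\mapsto t$, $E\mapsto 1$, and set $\Delta=\ker\Theta$. The second step is to verify that $\Theta$ is a well-defined homomorphism: one checks each relation of the canonical presentation (the long relation $d_1^2\cdots d_g^2 e_1\cdots e_k=1$, the relations $c_i^2=1$, and $e_ic_ie_i^{-1}=c_i$) maps to the identity in $C_2\times C_2$; because $C_2\times C_2$ is abelian and the generators are involutions in it, these are routine. Surjectivity is immediate since $A\mapsto s$ and $C\mapsto t$. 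The third step identifies the three index-two subgroups containing $\Delta$: writing $\mathrm{pr}_s,\mathrm{pr}_t$ for the two coordinate projections $C_2\times C_2\to C_2$, and $\mathrm{pr}_{st}$ for the projection onto the antidiagonal quotient, we get $\mathrm{pr}_t\circ\Theta=\theta_{3}$ (kernel $\Gamma_S$, the subgroup fixed setwise by $\langle t\rangle$ in the induced action), $\mathrm{pr}_s\circ\Theta=\theta_{2}$ (kernel $\Gamma_O$), and $\mathrm{pr}_{st}\circ\Theta=\theta_{1}$ (kernel $\Gamma^{+}$). This yields the lattice $\Delta=\Gamma_O\cap\Gamma_S=\Gamma_O\cap\Gamma^{+}=\Gamma_S\cap\Gamma^{+}$, so $\mathcal{U}/\Delta$ is simultaneously a double of $OX$, of $SX$, and of $X^{+}$.

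The fourth step realizes the group $C_2\times C_2$ as automorphisms of $DX=\mathcal{U}/\Delta$: since $\Delta$ is normal in $\Gamma$ with $\Gamma/\Delta\cong C_2\times C_2$, the quotient acts by dianalytic automorphisms of $\mathcal{U}/\Delta$, giving the required $\langle s,t\rangle\le\mathrm{Aut}^{\pm}(DX)$, and the three intermediate quotients are exactly $DX/\langle s\rangle=OX$, $DX/\langle t\rangle=SX$, $DX/\langle st\rangle=X^{+}$ by the projection identities above. Finally I must check that $DX$ is a genuine \emph{Riemann} surface, i.e.\ that $\Delta$ contains no reflections and acts freely orientation-preservingly. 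The reflections of $\Gamma$ are the conjugates of the $c_i$; since $\theta_3(c_i)=t\neq 1$, no $c_i$ lies in $\Gamma_S\supseteq\Delta$, so $\Delta$ is reflection-free, and since $\Delta\le\Gamma^{+}$ it is orientation-preserving and hence $DX$ has empty boundary and is orientable. I expect the main obstacle to be the verification that $DX$ is \emph{connected} (equivalently that $\Theta$ is genuinely surjective and that each intermediate double is connected, which fails in the orientable case and is the reason the hypothesis forces $X$ non-orientable): connectivity of $OX$ requires $X$ non-orientable, and one must confirm the images $\theta_2(A)=s$, $\theta_3(C)=t$ are nontrivial, which uses precisely that the $A$-generators are glide-reflections (non-orientable case) and that there is a boundary so $C\neq\varnothing$.
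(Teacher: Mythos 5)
Your proposal is essentially the paper's own proof: the paper likewise defines a single homomorphism $\omega:\Gamma\rightarrow C_{2}\times C_{2}$ constant on the generator sets ($A\mapsto t$, $E\mapsto 1$, $C\mapsto s$), sets $DX=\mathcal{U}/\ker\omega$, and reads off the three natural doubles as the quotients by the three order-two subgroups of $\Gamma/\ker\omega$; your additional verifications (the relations, surjectivity, and absence of reflections in $\ker\Theta$, which the paper leaves implicit) are correct, and connectedness is automatic since $\mathcal{U}/\ker\Theta$ is a quotient of the connected space $\mathcal{U}$. One slip to fix: with your assignment $A\mapsto s$, $C\mapsto t$, your own projection identities yield $DX/\left\langle s\right\rangle=SX$ and $DX/\left\langle t\right\rangle=OX$, the reverse of what you assert in your fourth step, so you should swap the names $s\leftrightarrow t$ (as the paper does) to match the labels in the theorem's statement.
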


\begin{proof}
We define $\omega:\Gamma\longrightarrow C_{2}\times C_{2}=\left\langle
s,t\right\rangle $ by:%
\[
A\rightarrow\{t\};E\rightarrow\{1\};C\rightarrow\{s\}
\]

Let $DX$ be ${\mathcal{U}}/\ker\omega$, then we have the following diagram
that proves the theorem:%
\[%
\begin{array}
[c]{ccccc}
&  & DX={\mathcal{U}}/\ker\omega &  & \\
& \swarrow & \downarrow & \searrow & \\
X^{+}={\mathcal{U}}/\omega^{-1}(\left\langle st\right\rangle ) &  &
OX={\mathcal{U}}/\omega^{-1}(\left\langle s\right\rangle ) &  &
SX={\mathcal{U}}/\omega^{-1}(\left\langle t\right\rangle )\\
& \searrow & \downarrow & \swarrow & \\
&  & X &  &
\end{array}
\]
\bigskip
\end{proof}

If $X$ is non-orientable, has genus $g$ and $k$ boundary components then (see
Table 1 and \cite{CHS}) the complex double is an (orientable) Riemann surface
(without boundary) of genus $g+k-1$, the orienting double is an orientable
Klein surface of genus $g-1$ with $2k$ boundary components, the Schottky
double is a non-orientable Klein surface without boundary of genus $2g+2k-2$
and, finally, the double of doubles of $X$ is an (orientable) Riemann surface
(without boundary) of genus $2g+2k-3$.

Note that $st$ is an orientation preserving element while $s$ and $t$ are
orientation reversing.

\begin{corollary}
Given a non-orientable Klein surface with non-empty boundary, the complex
double $(SX)^{+}=DX$ of the Schottky double $SX=DX/\left\langle t\right\rangle
$ of $X$ coincides with the complex double $(OX)^{+}=DX$ of the orienting
double $OX=DX/\left\langle s\right\rangle $ of $X$. The anticonformal
involution $s$ is fixed point free and the fixed point set of $t$ is
separating. The conformal involution $st$ is fixed point free.
\end{corollary}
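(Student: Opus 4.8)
The plan is to deduce everything from Theorem \ref{Main} together with the uniqueness property of the complex double. First I would establish the coincidence $(OX)^{+}=DX=(SX)^{+}$. By Theorem \ref{Main}, $DX={\mathcal{U}}/\ker\omega$ is a Riemann surface without boundary which is a degree two cover of both $OX=DX/\langle s\rangle$ and $SX=DX/\langle t\rangle$. Since the complex double of a Klein surface is, as recalled in Section 4, the unique double which is a Riemann surface without boundary, this forces $DX=(OX)^{+}$ and $DX=(SX)^{+}$. At the group level the same conclusion comes from an index argument: writing $M=\omega^{-1}(\langle s\rangle)$ and $\Lambda=\omega^{-1}(\langle t\rangle)$, the subgroup $\ker\omega$ has index $2$ in each of them and consists only of orientation preserving transformations (because $DX$ is a Riemann surface), so $\ker\omega$ is contained in the canonical Fuchsian subgroups $M^{+}$ and $\Lambda^{+}$; as neither $OX$ nor $SX$ is a Riemann surface without boundary, $M^{+}$ and $\Lambda^{+}$ are proper in $M$ and $\Lambda$, and the index $2$ then yields $\ker\omega=M^{+}=\Lambda^{+}$, i.e. $(OX)^{+}=(SX)^{+}=DX$.

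For the fixed point statements I would use that in a surface Euclidean or NEC group the only elements acting with fixed points on ${\mathcal{U}}$ are the reflections, since translations, hyperbolic elements and glide reflections act freely and there are no elliptic elements. A nontrivial deck transformation of $DX\to X$ realised by a coset $g\ker\omega$ has a fixed point on $DX$ precisely when $g\ker\omega$ contains a reflection of $\Gamma$. Now every reflection of $\Gamma$ is conjugate to one of the $c_{i}\in C$, and since $\omega$ takes values in the abelian group $C_{2}\times C_{2}$ it is constant on conjugacy classes; hence all reflections have the same image $\omega(c_{i})$. Computing this image from the definition of $\omega$ identifies exactly one of the three involutions whose coset meets the reflections, and shows that the remaining two act without fixed points. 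In particular the conformal involution $st$ is fixed point free, in agreement with the fact, already used above, that $DX\to X^{+}$ is an unbranched double cover of closed Riemann surfaces.

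Finally, to pass from ``has fixed points'' to ``separating'' I would invoke the classical dichotomy for an anticonformal involution $\tau$ of a closed Riemann surface: the quotient $DX/\langle\tau\rangle$ is orientable if and only if the fixed point set of $\tau$ separates $DX$, and it is non-orientable with empty boundary exactly when $\tau$ is fixed point free. Applying this to the two anticonformal involutions and reading off the orientability of the corresponding quotients from \ref{tab1} ($OX$ orientable with boundary, $SX$ non-orientable without boundary) gives the assertions: the involution with orientable bordered quotient has separating fixed curves, which cut $DX$ into two mirror copies of that quotient, while the involution with non-orientable boundary-free quotient acts freely. The main obstacle is precisely this last bookkeeping step, namely tracking which of $s$ and $t$ carries the reflections of $\Gamma$ and hence the boundary, since it is this matching that decides which involution is free and which one divides $DX$; everything else reduces to the index computation and the uniqueness of the complex double.
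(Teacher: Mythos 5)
Your overall route matches the paper's and is sound where it is carried out. The first part is exactly the paper's argument: $DX$ is a Riemann surface without boundary admitting degree two morphisms onto $OX$ and $SX$, and the uniqueness of the complex double forces $(OX)^{+}=(SX)^{+}=DX$; your supplementary index computation $\ker\omega=M^{+}=\Lambda^{+}$ is correct and a nice group-level confirmation. For the involutions, the paper argues directly from the quotients, which is your third paragraph: the deck involution over the boundaryless surface $SX$ is fixed point free, the deck involution over the orientable bordered surface $OX$ has separating fixed point set, and $DX\rightarrow X^{+}$ is an unbranched covering of Riemann surfaces, so $st$ is free. Your second paragraph (a deck transformation $g\ker\omega$ has fixed points iff its coset contains a reflection, reflections are conjugate into $C$, and $\omega$ is constant on conjugacy classes since $C_{2}\times C_{2}$ is abelian) is a correct alternative derivation of the same dichotomy that the paper does not spell out.

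The genuine gap is the step you explicitly defer as ``the main obstacle'': deciding which of $s,t$ carries the reflections is the entire content of the fixed-point assertions, and you never perform it, although it takes one line. By the definition of $\omega$ in Theorem \ref{Main} we have $\omega(C)=\{s\}$, so the $s$-coset is the unique coset meeting the reflections; hence $Fix(s)\neq\emptyset$ and, since $DX/\left\langle s\right\rangle =OX$ is orientable with boundary, $Fix(s)$ is separating, while $t$, whose quotient $DX/\left\langle t\right\rangle =SX$ is non-orientable without boundary, is the \emph{free} anticonformal involution. Note that this is the transpose of the corollary's literal wording: the statement (and the paper's own proof, which reads ``since $SX$ has no boundary then $s$ is fixed point free'' even though Theorem \ref{Main} gives $SX=DX/\left\langle t\right\rangle$) interchanges $s$ and $t$ relative to the conventions fixed by $\omega$. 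So your hesitation was well-founded --- completing the bookkeeping you flagged would have both closed your proof and detected this notational inconsistency in the paper; as submitted, however, your argument stops short of establishing which involution is free and which one separates, which is the one claim the corollary actually asserts beyond Theorem \ref{Main}.
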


\begin{proof}
Remember that if $X$ is a Klein surface then its complex double $X^{+}$ is the
unique double which is a Riemann surface without boundary. The double of
doubles $DX$ is a Riemann surface and there are degree two morphisms from $DX$
to $OX$ or $SX$ thus $(OX)^{+}=(SX)^{+}=DX$. Since $SX$ has no boundary then
$s$ is fixed point free and since $OX$ is orientable $DX-Fix(t)$ has two
connected components. Finally, as $DX\rightarrow X^{+}$ is an order two
morphism between Klein surfaces which are in fact Riemann surfaces, it is an
unbranched two fold covering as well and $st$ is fixed point free.
\end{proof}

The description of the unbranched covering $DX\rightarrow X^{+}$ is the following:

\begin{theorem}
Let $\sigma$ be the anticonformal involution given by $X^{+}\rightarrow X$ and
$Fix(\sigma)$ be the fixed point set of $\sigma$. Let $\left\langle
.,.\right\rangle $ be the intersection form in $H_{1}(X^{+},\mathbb{Z}_{2})$
and $[Fix(\sigma)]$ be the cycle in $H_{1}(X^{+},\mathbb{Z}_{2})$ represented
by the union of the curves in $Fix(\sigma)$. The covering $DX\rightarrow
DX/\left\langle st\right\rangle =X^{+}$ is an unbranched covering with
monodromy
\begin{align*}
\left\langle \lbrack Fix(\sigma)],.\right\rangle  &  :\pi_{1}(X^{+}%
)\rightarrow H_{1}(X^{+},\mathbb{Z}_{2})\rightarrow C_{2}\\
\gamma &  \mapsto\lbrack\gamma]\mapsto\left\langle \lbrack Fix(\sigma
)],[\gamma]\right\rangle
\end{align*}

\end{theorem}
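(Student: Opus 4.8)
The plan is to identify the monodromy of the covering $DX\rightarrow X^{+}$ as an explicit homomorphism coming from $\omega$, and then to recognize that homomorphism, via Poincar\'{e} duality on the closed orientable surface $X^{+}$, as intersection with $Fix(\sigma)$. First I would make the monodromy explicit. Writing $\pi_{1}(X^{+})=\Gamma^{+}=\omega^{-1}(\langle st\rangle)$ and $\pi_{1}(DX)=\ker\omega\subset\Gamma^{+}$, a loop $\gamma\in\Gamma^{+}$ lifts to a loop in $DX$ exactly when $\omega(\gamma)=1$; since $\omega(\gamma)\in\{1,st\}$ for $\gamma\in\Gamma^{+}$, the monodromy is the homomorphism $\mu=\omega_{s}|_{\Gamma^{+}}:\Gamma^{+}\rightarrow C_{2}$, where $\omega_{s}$ denotes the coordinate of $\omega$ sending each reflection $c_{i}$ to the generator and killing the generators in $A\cup E$. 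Being a homomorphism to $C_{2}$, $\mu$ factors through $H_{1}(X^{+},\mathbb{Z}_{2})$ and so defines a class in $H^{1}(X^{+},\mathbb{Z}_{2})$.

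Next I would invoke Poincar\'{e} duality. As $X^{+}$ is a closed orientable surface, the intersection form on $H_{1}(X^{+},\mathbb{Z}_{2})$ is nondegenerate, so $\mu=\langle\xi,\cdot\rangle$ for a unique class $\xi$, and the theorem reduces to proving $\xi=[Fix(\sigma)]$, i.e. $\mu(\gamma)=\langle[Fix(\sigma)],[\gamma]\rangle$ for every $\gamma$. Since both sides are additive in $[\gamma]$, it suffices to compare $\omega_{s}(\gamma)$ with the parity of the number of times $\gamma$ crosses $Fix(\sigma)$.

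The geometric heart is a crossing-and-colouring computation in $\mathcal{U}$. The preimage of $Fix(\sigma)$ under $\mathcal{U}\rightarrow X^{+}$ is precisely the union of the axes of all reflections of $\Gamma$, since glide reflections have no fixed points; in the tiling of $\mathcal{U}$ by translates of a fundamental polygon $\mathcal{P}$ these axes are exactly the walls coming from the edges fixed by the $c_{i}$, whereas the remaining walls are the side-pairings given by generators in $A\cup E$. Colouring the translate $g\mathcal{P}$ by $\omega_{s}(g)\in\mathbb{Z}_{2}$, crossing a reflection-axis wall replaces $g$ by $gc_{i}$ and crossing any other wall replaces $g$ by $gx^{\pm1}$ with $x\in A\cup E$. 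The decisive point is that in a surface NEC group every reflection is conjugate to one of the $c_{i}$, so $\omega_{s}$ equals $1$ on every reflection by conjugation invariance (its target being abelian), while $\omega_{s}$ vanishes on $A\cup E$. Hence the colour flips across exactly the reflection-axis walls. Lifting a loop $\gamma\in\Gamma^{+}$ from the base region to $g\mathcal{P}$, the number of reflection-axis walls crossed has parity $\omega_{s}(\gamma)$, and this parity is precisely the mod-$2$ intersection number $\langle[Fix(\sigma)],[\gamma]\rangle$, giving $\mu(\gamma)=\langle[Fix(\sigma)],[\gamma]\rangle$.

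The step I expect to be the main obstacle is this last identification: one must be certain that the only colour-flipping walls are the reflection axes, so that crossings of the other walls contribute nothing, and that these walls form exactly the preimage of $Fix(\sigma)$. Both hinge on the facts that $\omega_{s}$ kills $A\cup E$ but is nonzero on every reflection, and that in a surface NEC group the reflections are precisely the conjugates of the canonical generators $c_{i}$; with these in hand the combinatorial wall-count coincides with the geometric intersection number and the theorem follows.
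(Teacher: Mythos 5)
Your proposal is correct and follows essentially the same route as the paper: identify the monodromy of $DX\rightarrow X^{+}$ with the restriction of $\omega$ to $\Gamma^{+}=\omega^{-1}(\left\langle st\right\rangle )$, observe that on $\Gamma^{+}$ this is the parity of the number of reflections in a canonical word, and match that parity with the mod $2$ intersection number with $Fix(\sigma)$. Your tessellation-colouring argument (and the appeal to Poincar\'{e} duality, which is strictly optional) simply makes explicit the step the paper states in one line, namely that the projection of the axis of $g$ meets $Fix(\sigma)$ in a number of points of the same parity as the number of reflections in the word $w$.
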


\begin{proof}
Let us restrict our proof to the case where $X$ is a Klein surface with
algebraic genus $>1$, then $\Gamma$ is NEC\ group. The group uniformizing
$X^{+}$ is $\Gamma^{+}$ and the monodromy of the covering $DX\rightarrow
X^{+}$ is just the restriction of $\omega$ to $\Gamma^{+}=\omega
^{-1}(\left\langle st\right\rangle )$ which is an epimorphism on $\left\langle
st\right\rangle =C_{2}$. Let $g\in\Gamma^{+}$ such that $\omega(g)=$ $st$ and
let $w$ be an expression of $g$ as an irreducible word in some canonical set
of generators of $\Gamma$. Then an odd number of reflections appears in the
word $w$. If $\gamma$ is the curve that is the projection on $X^{+}$ of the
axis of the hyperbolic element $g$, then $\gamma$ cuts $Fix(\sigma)$ in an odd
number of points. Hence $\left\langle [Fix(\sigma)],[\gamma]\right\rangle
\neq0$. In similar way if $\omega(g)=1$ then $\left\langle [Fix(\sigma
)],[\gamma]\right\rangle =0$. Thus $\left\langle [Fix(\sigma)],.\right\rangle
$ is given by the restriction of $\omega$ to $\Gamma^{+}$, so it is the
monodromy of $DX\rightarrow X^{+}$.
\end{proof}

Note that $DX\rightarrow X^{+}$ is something living completely in the theory
of Riemann surfaces and that is naturally given by the non-orientable bordered
Klein surface $X$.

\bigskip

\section{The automorphism group of $DX\rightarrow X$ cannot be visualized in
$\mathbb{R}^{3}$.}

Every smooth surface in the Euclidean space can be made into a Riemann surface
in a natural way by restriction of the Euclidean metric to it. These surfaces
are called classical Riemann surfaces and they are considered by Beltrami and
Klein (see the introduction of \cite{G} and chapter II, section 5 of
\cite{AS}). There are some automorphisms of Riemann surfaces that can be
represented by the restriction to classical Riemann surfaces of isometries of
the Euclidean space. This is a natural way of visualizing automorphisms of
Riemann surfaces.

Note that each one of the three automorphisms $s,t,st\,$\ of the preceding
section are representable as restriction of isometries to classical Riemann
surfaces (see \cite{C}), but the complete group action of $C_{2}\times C_{2}$
is not the restriction of a finite group of isometries, so it
\textquotedblleft cannot be visualized\textquotedblright.

\textit{Example 6.1}. If $M$ is a M\"{o}bius band we know that $DM$ is an
analytical torus conformally equivalent to a classical torus $T_{1}$ embedded
in $\mathbb{R}^{3}$, such that $T_{1}$ is invariant by an order two rotation
$r$ with axis non-cutting $T_{1}$ and that the unbranched covering
$DM\rightarrow M^{+}$ is analytically equivalent to $T_{1}\rightarrow
T_{1}/\left\langle r\right\rangle $. Analogously there are embedded tori
$T_{2}$ and $T_{3}$ such that $T_{2}$ is invariant by a plane reflection $p$
with $T_{2}\rightarrow T_{2}/\left\langle p\right\rangle $ equivalent to
$DM\rightarrow OM$ and $T_{3}$ is invariant by a central symmetry $c$ such
that $T_{3}\rightarrow T_{3}/\left\langle c\right\rangle $ is equivalent to
$DM\rightarrow SM$. But there is no embedded torus $T$ and no group of
isometries $G$, isomorphic to $C_{2}\times C_{2}$, such that $T\rightarrow
T/G$ is equivalent to $DM\rightarrow M$. The obstruction is of topological
nature. Assume that we have such classical torus $T$ and a group of isometries
$G$ such that $T\rightarrow T/G$ is equivalent to $DM\rightarrow M $:\ the
group $G$ must be generated by a plane reflection and a central symmetry.
Furthermore the order two rotation $r$ of $G$ must not cut the torus $T$
because $T\rightarrow T/\left\langle r\right\rangle $ is equivalent to
$DM\rightarrow M^{+}$. The plane of symmetry must be orthogonal to the axis of
$r$, then $T/G$ is homeomorphic to a cylinder and not a M\"{o}bius band. Thus
$T\rightarrow T/G$ is not equivalent to $DM\rightarrow M$.

\bigskip

\section{The double of doubles and automorphisms}

Doubles of Klein surfaces are useful for the study of the automorphism groups
of Klein surfaces. Every automorphism of a given Klein surface $X$ lifts to an
automorphism of the complex double $X^{+}$, and in this way it is possible to
study the automorphisms of Klein surfaces by using automorphisms of Riemann
surfaces. The difficulty arises when some of the automorphisms in $X^{+}$ are
not liftings of automorphisms of $X$ and then $\mathrm{Aut}(X^{+})$ is not
isomorphic to $C_{2}\times\mathrm{Aut}(X)$. This difficulty remains, even in
case of maximal symmetry, when considering the double of doubles for
non-orientable Klein surfaces, as shown in the last example of this section;
nevertheless, the information the automorphisms of $DX$ may provide is better
than the one given by $\mathrm{Aut}(X^{+})$. This claim is supported by the
fact that although not every automorphism of $X^{+} $ lifts to $DX$ (see first
example of this section), this is true for the automorphisms of $X$ (next theorem).

\begin{theorem}
Let $X$ be a bordered non-orientable Klein surface $X$ and let $DX$ be the
double of the doubles of $X$. Then every automorphism of $X$ lifts to an
automorphism of $\mathrm{Aut}^{\pm}(DX)$ and $\mathrm{Aut}^{\pm}(DX)$ contains
a group isomorphic to $\mathrm{Aut}X\times C_{2}\times C_{2}$.
\end{theorem}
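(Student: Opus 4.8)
The plan is to work, as in the previous theorems, with the uniformization $X=\mathcal{U}/\Gamma$ (restricting to algebraic genus $>1$, so that $\Gamma$ is an NEC group, and treating the genus-one case separately as in Example 6.1) and to turn the lifting of automorphisms into a normalizer condition. Writing $N(\Gamma)$ for the normalizer of $\Gamma$ in the isometry group of $\mathcal{U}$, one has $\mathrm{Aut}^{\pm}(X)\cong N(\Gamma)/\Gamma$, and every $f\in\mathrm{Aut}(X)$ has a lift $\hat{f}\in N(\Gamma)$, unique up to $\Gamma$. Such an $\hat{f}$ descends to an automorphism of $DX=\mathcal{U}/\ker\omega$ exactly when $\hat{f}(\ker\omega)\hat{f}^{-1}=\ker\omega$, so the whole theorem rests on showing that $N(\Gamma)$ normalizes $\ker\omega$.

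For this I would not compute with $\omega$ directly but exploit the canonicity of the three natural doubles. The subgroup $\ker\omega$ is the intersection of the three index-two subgroups $\omega^{-1}(\langle st\rangle)=\Gamma^{+}$, $\omega^{-1}(\langle s\rangle)$ and $\omega^{-1}(\langle t\rangle)$, which by Theorem~\ref{Main} and Table 1 uniformize $X^{+}$, $OX$ and $SX$. Each of these doubles is attached to $X$ by an intrinsic prescription: $X^{+}$ is the unique double that is a Riemann surface without boundary, $OX$ is the orientation double cover, and $SX$ is the unique boundaryless double of the same orientability whose fixed curves separate. Hence any $f\in\mathrm{Aut}(X)$ lifts to each of $X^{+}$, $OX$ and $SX$, which means $\hat{f}$ normalizes each of $\Gamma^{+}$, $\omega^{-1}(\langle s\rangle)$ and $\omega^{-1}(\langle t\rangle)$; equivalently $\omega\circ c_{\hat{f}}=\omega$, where $c_{\hat{f}}$ is conjugation by $\hat{f}$. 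Intersecting, $\hat{f}$ normalizes $\ker\omega$, so $f$ lifts to $DX$ and $N(\Gamma)\subseteq N(\ker\omega)$.

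This already gives $\mathrm{Aut}^{\pm}(DX)\supseteq N(\ker\omega)/\ker\omega\supseteq N(\Gamma)/\ker\omega$, and the latter sits in an extension $1\to\Gamma/\ker\omega\to N(\Gamma)/\ker\omega\to\mathrm{Aut}(X)\to 1$ with $\Gamma/\ker\omega=\langle s,t\rangle\cong C_{2}\times C_{2}$. Since $c_{\hat{f}}$ fixes $s$, $t$ and $st$, the conjugation action on $\langle s,t\rangle$ is trivial and the extension is central. To upgrade this central extension to the asserted direct product $\mathrm{Aut}(X)\times C_{2}\times C_{2}$, I would produce a complement to $\langle s,t\rangle$, equivalently extend $\omega$ from $\Gamma$ to $N(\Gamma)$. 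In the basis $s=(1,0)$, $t=(0,1)$ write $\omega=(\alpha,\beta)$; then $\alpha+\beta=\rho$ is the orientation character of $\Gamma$, and $\rho$ extends to $N(\Gamma)$ for free as the restriction of the global analytic/anti-analytic character of the isometry group. So the splitting reduces to extending one further character, the Schottky character $\alpha$ (constant $=1$ on $C$ and $=0$ on $A\cup E$).

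The extension of this last character is where I expect the real difficulty: the obstruction lives in $H^{2}(\mathrm{Aut}(X);C_{2})$ and there is no formal reason for it to vanish. The hard part will be to realize $\alpha$ geometrically — as the mod-two boundary-crossing (sheet) character of the Schottky doubling — and to verify that an automorphism of $X$ respects this $C_{2}$-grading, so that a consistent value $\alpha(\hat{f})$ can be assigned to each lift; together with the already-extended $\rho$ this produces a homomorphism $N(\Gamma)\to C_{2}\times C_{2}$ and hence a copy of $\mathrm{Aut}(X)\times C_{2}\times C_{2}$ inside $\mathrm{Aut}^{\pm}(DX)$. I would stress that only containment, not equality, is claimed: as noted in the text, $\mathrm{Aut}^{\pm}(DX)$ is in general strictly larger, since not every automorphism of $X^{+}$ lifts to $DX$.
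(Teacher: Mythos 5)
Your first half is sound and is a genuinely different route from the paper's: you characterize $\ker\omega$ as the intersection of the three index-two subgroups $\Gamma^{+}$, $\omega^{-1}(\langle s\rangle)$, $\omega^{-1}(\langle t\rangle)$, argue each is canonically attached to $X$, and conclude that every lift $\hat{f}\in N(\Gamma)$ normalizes $\ker\omega$; this yields the lifting statement and the central extension $1\to C_{2}\times C_{2}\to N(\Gamma)/\ker\omega\to\mathrm{Aut}(X)\to1$. (One caveat: for the Schottky subgroup you need uniqueness among \emph{all} index-two surface subgroups of $\Gamma$, not merely among the standard epimorphisms of Table 1; since index-two subgroups are normal, uniqueness of $SX$ as a covering of $X$ does suffice, but this needs a line of justification. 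Alternatively, Theorem 5 of the paper characterizes $\ker\omega$ inside $\Gamma^{+}$ as the kernel of $\langle[Fix(\sigma)],\cdot\rangle$, which is manifestly preserved by any lift, and bypasses the issue.) Your reduction of the direct-product claim to extending the Schottky character $\alpha$ from $\Gamma$ to $N(\Gamma)$ is also the correct reduction.

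The genuine gap is that you stop exactly where the theorem's second assertion begins: the extension of $\alpha$ is announced (\textquotedblleft I would produce a complement\textquotedblright, \textquotedblleft the hard part will be\textquotedblright) but never carried out, and your own observation that the obstruction in $H^{2}(\mathrm{Aut}(X);C_{2})$ has no formal reason to vanish leaves the containment $\mathrm{Aut}X\times C_{2}\times C_{2}\leq\mathrm{Aut}^{\pm}(DX)$ unproved; a central extension by $C_{2}\times C_{2}$ need not contain such a subgroup. The paper's proof consists essentially of closing precisely this step, and it does so by an explicit finite check rather than cohomological machinery: taking $\Delta$ with $\Gamma\vartriangleleft\Delta$ and $\Delta/\Gamma\cong\mathrm{Aut}X$, it defines $\theta_{3}^{\prime}$ on the canonical generators of $\Delta$ by $\theta_{3}^{\prime}(s)\neq1$ if and only if $s$ is a reflection lying in $\Gamma$. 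Homomorphy need only be verified on the relations of $\Delta$ involving reflections: $c_{ij}^{2}=1$ and $e_{i}^{-1}c_{i0}e_{i}=c_{is_{i}}$ are respected because $C_{2}$ is abelian and membership in $\Gamma$ is conjugation-invariant ($\Gamma\vartriangleleft\Delta$); for $(c_{i,j-1}c_{i,j})^{n_{ij}}=1$ the even case is a parity triviality, while for odd $n_{ij}$ the relation forces $c_{i,j-1}$ and $c_{i,j}$ to be conjugate, hence both or neither lie in $\Gamma$, so the relation maps to $1$. This is exactly the \textquotedblleft verification that automorphisms respect the $C_{2}$-grading\textquotedblright\ you anticipated: the grading is the parity of the number of reflections conjugate into $\Gamma$ in a word, and the paper then identifies $\ker(\theta,\theta_{2}^{\prime},\theta_{3}^{\prime})$ with the uniformizing group of $DX$ by matching its monodromy over $X^{+}$ with the description $\langle[Fix(\sigma)],\cdot\rangle$ of Theorem 5, giving the direct product inside $\mathrm{Aut}^{\pm}(DX)$. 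Without this (or an equivalent) argument your proposal establishes only the lifting claim, not the group-theoretic conclusion of the theorem.
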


\begin{proof}
We shall prove the result for the case of surfaces $X$ of genus $>1$. Let
$\Gamma$ be a surface NEC\ group such that $X=\mathcal{U}/\Gamma$ and $\Delta$
be such that $\Gamma\vartriangleleft\Delta$ and $\Delta/\Gamma$ is isomorphic
to $\mathrm{Aut}X$. Let $\theta:\Delta\rightarrow\Delta/\Gamma\simeq
\mathrm{Aut}X$ be the natural map and $\left\langle S:R\right\rangle $ be a
canonical presentation of $\Delta$ (see, for instance, \cite{BEGG} page 14).

Let us define $\theta^{\prime}:\Delta\rightarrow\mathrm{Aut}X\times
C_{2}\times C_{2}$, by:%
\[
\theta^{\prime}(s)=(\theta(s),\theta_{2}^{\prime}(s),\theta_{3}^{\prime}(s))
\]
where $s\in S$ is a generator of the canonical presentation of $\Delta$,
$\theta_{2}^{\prime}(s)\neq1$ if and only if $s$ is orientation reversing and
$\theta_{3}^{\prime}(s)\neq1$ if and only if $s$ is a reflection in $\Gamma$.

Let us see that $\theta_{3}^{\prime}:\Delta\rightarrow C_{2}$ is a
homomorphism: the relations in $R$ which contain reflections have either the
form $e_{i}^{-1}c_{i0}e_{i}=c_{is_{i}}$, $c_{ij}^{2}=1$ or $(c_{i,j-1}%
c_{i,j})^{n_{ij}}=1$. Since $C_{2}$ is abelian and $\Gamma\vartriangleleft
\Delta$, the relations of the two first types are automatically respected by
$\theta_{3}^{\prime}$ . In the third type, if $n_{ij}$ is even, the relations
are respected by $\theta_{3}^{\prime}$ because $\theta_{3}^{\prime}%
(\Delta)=C_{2}$; if $n_{ij}$ is odd the relation $(c_{i,j-1}c_{i,j})^{n_{ij}%
}=1$ tells us that $c_{i,j-1}$ and $c_{i,j}$ are conjugate and thus either
both $c_{i,j-1}$ and $c_{i,j}$ belong to $\Gamma$ or none of them is in
$\Gamma$; in any case, $\theta_{3}^{\prime}(c_{i,j-1}c_{i,j})=1$ and the
relation is also respected.

Note that $\ker\theta=\Gamma$ uniformizes $X$, $\ker(\theta,\theta_{2}%
^{\prime})$ uniformizes $X^{+}$ and $\ker\theta^{\prime}=\ker(\theta
,\theta_{2}^{\prime},\theta_{3}^{\prime})$ uniformizes a two fold covering of
$X^{+}$. The monodromy $\omega:\ker(\theta,\theta_{2}^{\prime})=\pi_{1}%
(X^{+})\rightarrow C_{2}$ of $\mathcal{U}/\ker\theta^{\prime}\rightarrow
X^{+}$ is given by the following rule: if $\gamma\in\ker(\theta,\theta
_{2}^{\prime})$, $\omega(\gamma)\neq1$ if and only if $\gamma$ can be
expressed as a word $w_{S}$ in the system of generators $S$ of the canonical
presentation of $\Delta$, such that there is an odd number of reflections
conjugate to reflections of $\Gamma$. And this is exactely the monodromy of
$DX\rightarrow X^{+}$ by Theorem 5.

Since $\ker\theta^{\prime}$ uniformizes $DX$, every automorphism of $X$ admits
a lifting to $DX$ and $\mathrm{Aut}^{\pm}(DX)$ contains a group isomorphic to
$\mathrm{Aut}X\times C_{2}\times C_{2}$.
\end{proof}

As a consequence, an automorphism of $X^{+}$ not lifting to an automorphism of
$DX$, cannot be itself a lift of an automorphism of $X$, meaning that the
automorphisms of $DX$ provide better information on $\mathrm{Aut}(X)$ than
$\mathrm{Aut}^{\pm}(X^{+})$ do. Next example illustrates this situation:

\textit{Example 8.1}. Let $\Delta$ be a maximal NEC group with signature
\[
(1;+;[3];\{(3)\})
\]
and
\[
\left\langle a,b,x,c_{0},c_{1},e:xeaba^{-1}b^{-1}=1,x^{3}=1,c_{0}^{2}%
=c_{1}^{2}=1,(c_{0}c_{1})^{3}=1,ec_{0}e^{-1}=c_{1}\right\rangle
\]
be a canonical presentation of $\Delta$. Let us consider the epimorphism:%
\[
\theta:\Delta\rightarrow D_{3}=\left\langle s,t:s^{2}=t^{2}=(st)^{3}%
=1\right\rangle
\]
defined by:%
\begin{align*}
\theta(a)  &  =\theta(b)=1;\theta(x)=st;\theta(e)=ts\\
\theta(c_{0})  &  =s;\theta(c_{1})=tst
\end{align*}
The NEC\ group $\theta^{-1}(\left\langle s\right\rangle )$ is a non-orientable
surface crystallographic group with signature $(7;-;[-];\{(-)\})$; so
$X=\mathcal{U}/\theta^{-1}(s)$ is a Klein surface. The complex double $X^{+}$
is uniformized by $\ker\theta$ and its automorphism group is $\Delta
/\ker\theta=D_{3}$ (note that we have assumed $\Delta$ maximal).

The group $\theta^{-1}(s)$ is not normal in $\Delta,$ so the automorphism
group of $X$ is trivial, but $\mathrm{Aut}^{\pm}(X^{+})=D_{3}$, thus
$\mathrm{Aut}^{\pm}(X^{+})\gvertneqq\mathrm{Aut}(X)\times C_{2}$. The
anticonformal involution $s$ of $X^{+}$ producing $X$ as quotient has a
connected closed curve $\gamma$ as fixed point set. We will call $st$ an order
three conformal automorphism of $X^{+}$. The automorphism $st$ does not lift
to $DX$. A reason for that is as follows: the curve $\partial X$ lifts to a
closed curve with two connected components in $DX$ and to $\gamma$ in $X^{+}$,
but $st(\gamma)$ cuts $\gamma$ just in the only fixed point of $st$ which
projects on the boundary of $X$. Therefore, $st(\gamma)$ lifts to a connected
curve of $DX$ and this fact prevents the existence of a lift of $st$ to $DX$.

Finally, in the next example we show that, in some cases, the information on
$\mathrm{Aut}(X)$ provided by $\mathrm{Aut}(DX)$ is not essentially better
than the one obtained by $\mathrm{Aut}^{\pm}(X^{+})$. In fact, in \cite{BCGS}
and \cite{M}, it is established that if a bordered Klein surface $X$ has
maximal symmetry or \textquotedblleft almost maximal
symmetry\textquotedblright\ (more concretely if $\#\mathrm{Aut}(X)\geq8(p-1)
$, where $p$ is the algebraic genus of $X$) then there is a finite number of
Klein surfaces where $\mathrm{Aut}^{\pm}(X^{+})$ contains properly
$C_{2}\times\mathrm{Aut}(X)$. When $X$ is non-orientable with boundary the
first occurrence of such situation is described in the following example:

\textit{Example 8.2}. We shall describe a Klein surface $P2$ which is
topologically a projective plane with two holes and such that $\mathrm{Aut}%
^{\pm}(P2^{+})\neq C_{2}\times\mathrm{Aut}(P2)$ and $\mathrm{Aut}(DP2)\neq
C_{2}\times C_{2}\times\mathrm{Aut}(P2)$. The surface $P2$ can be uniformized
by a NEC group $\Gamma$ whose fundamental region is a regular right angled
hyperbolic octagon $O$ and the elements of $\Gamma$ produce a pairwise
identification of the sides of $O$ given by the following symbol:
\[
\alpha_{1}\gamma_{1}\alpha_{2}\gamma_{2}\alpha_{1}^{\ast}\gamma_{1}^{\prime
}\alpha_{2}^{\ast}\gamma_{2}^{^{\prime}}%
\]
where $\alpha_{i}$ is identifyied with $\alpha_{i}^{\ast}$ by a hyperbolic
glide reflection $d_{i}$, $i=1,2,\,$and $\gamma_{1}\cup\gamma_{1}^{\prime}$,
$\gamma_{2}\cup\gamma_{2}^{\prime}$ give rise to the two components of
$\partial P2$, i. e. for each $i=1,2$, $\gamma_{i}$,$\gamma_{i}^{\prime}$, are
in the fixed point set of reflections of $\Gamma$. The automorphisms group of
$P2$ is $D_{4}$ then $\mathrm{Aut}(P2)$ has of order $8$.

Now $P2^{+}$ is uniformized by the surface Fuchsian group $\Gamma^{+}$ and the
regular octagon in $P2$ lifts to a regular map $\{8,4\}$ in $P2^{+}$. Note
that there is only a regular map of type $\{8,4\}$ in surfaces of genus 2:
($R2.3^{\prime}$ following the notation in \cite{CD}). Then $P2^{+}$ is the
underlying Riemann surface in the regular map $R2.3^{\prime}$. Since such a
map can be obtained as a stellation of the regular map $R2.1$ of type
$\{3,8\}$, the Riemann surface $P2^{+}$ is also the surface underlying such
map. The group of automorphisms of $P2^{+}$ is the group of automophisms of
$R2.1$, hence the group of automorphisms of $P2^{+}$ is a $C_{2}-$extension of
$GL(2,3)$ and has $96$ elements (see Theorem B of \cite{BCGS}), so
$\mathrm{Aut}^{\pm}(P2^{+})\neq C_{2}\times\mathrm{Aut}(P2)$.

The double of doubles $DP2$ is a two fold covering of $P2^{+}$ and the map
$R2.1$ lifts to a regular map of type $\{3,8\}$. Since there is only one
regular map on genus three surfaces of type $\{3,8\}$:$\,R3.2$ (see
\cite{CD}), $DP2$ is the genus three Riemann surface underlying $R3.2$ (the
dual of the Dyck map). The group $\mathrm{Aut}(DP2)$ is the full symmetry
group of the map $R3.2$ and $\#\mathrm{Aut}(DP2)=192$ ($\#\mathrm{Aut}%
^{+}(DP2)=96,$ see for instance \cite{KK}). Hence $\mathrm{Aut}(DP2)\neq
C_{2}\times C_{2}\times\mathrm{Aut}(P2)$.

\bigskip

\section{An application to the study of the moduli space of non-separating
real algebraic curves}

The complexification $C_{\mathbb{C}}$ of a (smooth projective) \textit{real}
algebraic curve $C$ is a \textit{complex} algebraic curve, thus a compact
Riemann surface. The conjugation provides an anticonformal involution $\sigma$
on $C_{\mathbb{C}}$ and the pair $(C_{\mathbb{C}},\sigma)$ determines
completely the real curve $C$. The pair $(C_{\mathbb{C}},\sigma)$ is given by
the Klein surface $K_{C}=C_{\mathbb{C}}/\left\langle \sigma\right\rangle $, so
$K_{C}$ represents the real algebraic curve, too. The topological type $t$ of
$K_{C}$ is $(h;\pm;k)$, where $h$ is the topological genus, the sign $\pm$ is
given by the orientability and $k$ is the number of connected components of
$\partial K_{C}$. The complexification $C_{\mathbb{C}}$ is, in fact, the
complex double of $K_{C}$ and the genus of $C_{\mathbb{C}}$ is the algebraic
genus of $K_{C}$.

Assume that $K_{C}$ is non-orientable, i. e. the topological type is $(h;-;k)
$; the fixed point set $Fix(\sigma)$ of the involution $\sigma$ does not
separate the Riemann surface $C_{\mathbb{C}}$, which is why $C$ is called a
non-separating real algebraic curve.

The spaces of deformations or moduli spaces are important tools in the study
of algebraic curves. There is a different moduli space for each topological
type of Klein surfaces, i. e. once the genus of the complexification of the
real algebraic curve is fixed, the space of deformations for real
non-separating algebraic curves with algebraic genus $g$ is the disjoint
union:%
\[
\mathcal{M}_{g}^{\mathbb{R},-}\mathcal{=}%
{\displaystyle\bigcup\limits_{h+k-1=g}}
\mathcal{M}_{(h;-;k)}^{K}%
\]
where $\mathcal{M}_{(h;-;k)}^{K}$ is the moduli space of Klein surfaces with
topological type $(h;-;k)$ (see for instance \cite{Se2}, \cite{BEGG}, \cite{N}).

In some situations it is important to have a common space to relate the
different topological types of real curves with the same complexification. The
set $\mathcal{M}_{g}^{\mathbb{R},-,\mathbb{C}}$ corresponds to the set of
points in $\mathcal{M}_{g}$ that are Riemann surfaces having a non-separating
anticonformal involution. Then
\[
\mathcal{M}_{g}^{\mathbb{R},-,\mathbb{C}}=%
{\displaystyle\bigcup\limits_{0\leq k\leq g}}
\mathcal{M}_{g}^{(-,k)}%
\]
where $\mathcal{M}_{g}^{(-,k)}$ is the set of points in $\mathcal{M}_{g}$
corresponding to Riemann surfaces with an anticonformal involution of
topological type $t=(-,k)$. This space has been studied by many authors:
\cite{Se1}, \cite{N}, \cite{BCI}, \cite{CI}. Now a real non-separating
algebraic curve is a pair $(X,\sigma)$ where $X\in\mathcal{M}_{g}%
^{\mathbb{R},-}$ and $\sigma$ is an anticonformal involution of the Riemann
surface $X$, and $X/\left\langle \sigma\right\rangle $ is non-orientable. The
map $\phi:\mathcal{M}_{g}^{\mathbb{R},-}\rightarrow\mathcal{M}_{g}%
^{\mathbb{R},-,\mathbb{C}}$ given by $\phi(K)=K^{+}$ is continuous and $\phi$
restricted to $\mathcal{M}_{(h;-;k)}^{K}$ is an (orbifold) embedding, for
$h+k-1=g$ (see Corollary 8.9 of \cite{MS} and \cite{BCNS}).

Now by using the results of the above sections we obtain that real
non-separating algebraic curves of algebraic genus $g$ are in the intersection
of just two connected real analytic spaces in $\mathcal{M}_{2g-1}$.

\begin{theorem}
Let $\mathcal{M}^{(+,0)}$ (respectively $\mathcal{M}^{(-,0)}$) be the set in
$\mathcal{M}_{2g-1}$ consisting of the surfaces admitting a fixed point free
conformal (resp. anticonformal) involution. We define
\[
\mathcal{N}_{g}^{\mathbb{R},-}=\mathcal{M}^{(+,0)}\cap\mathcal{M}^{(-,0)}%
\]

Then there exists a continuous map $\psi:\mathcal{M}_{g}^{\mathbb{R}%
,-}\rightarrow\mathcal{N}_{g}^{\mathbb{R},-}$, and $\psi\circ\phi$ restricted
to $\mathcal{M}_{g}^{(-,k)}$ is an embedding, for each $k=0,...,g$.
\end{theorem}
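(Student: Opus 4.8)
The plan is to realize $\psi\circ\phi$ concretely as the assignment sending a real non-separating curve, represented by a non-orientable bordered Klein surface $K$ of algebraic genus $g$, to the conformal class $[DK]$ of its double of doubles. Since $K$ has algebraic genus $g$, the genus computation recorded after Theorem \ref{Main} gives that $DK$ is a Riemann surface of genus $2g-1$, so $[DK]$ is a point of $\mathcal{M}_{2g-1}$. The first step is to check that this point lies in $\mathcal{N}_g^{\mathbb{R},-}$, and here the Corollary does all the work: $DK$ carries the fixed point free conformal involution $st$, so $[DK]\in\mathcal{M}^{(+,0)}$, and the fixed point free anticonformal involution $s$, so $[DK]\in\mathcal{M}^{(-,0)}$. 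Hence $[DK]\in\mathcal{M}^{(+,0)}\cap\mathcal{M}^{(-,0)}=\mathcal{N}_g^{\mathbb{R},-}$. I would define $\psi$ on the image of $\phi$ by the covering construction of Theorem 5 (from $(K^{+},\sigma)$ take the unbranched double cover with monodromy $\langle[Fix(\sigma)],\cdot\rangle$), so that $\psi\circ\phi(K)=[DK]$ as desired.

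Next I would establish continuity by passing to Teichm\"uller space. Writing $K=\mathcal{U}/\Gamma$, the uniformizing group varies real-analytically with the moduli parameters, while $\ker\omega$ is the fixed finite-index subgroup of $\Gamma$ determined by the epimorphism $\omega$ of Theorem \ref{Main}; equivalently, by Theorem 5 the cover $DK\to K^{+}$ has monodromy $\langle[Fix(\sigma)],\cdot\rangle$, which is locally constant as $(K^{+},\sigma)$ deforms inside a fixed topological type. In either description the conformal structure of $DK$ depends continuously (indeed real-analytically) on the parameters of $K$ and descends to a continuous map on moduli. Combined with the continuity of $\phi$ quoted from \cite{MS} and \cite{BCNS}, this gives continuity of $\psi\circ\phi$.

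For the embedding assertion I would work one topological type at a time, which is exactly what the restriction to $\mathcal{M}_g^{(-,k)}$ (equivalently to $\mathcal{M}_{(h;-;k)}^{K}$ with $h+k-1=g$) amounts to. Since $\phi$ restricted to $\mathcal{M}_{(h;-;k)}^{K}$ is already an orbifold embedding, it suffices to prove that $\psi$ is an embedding on the subvariety $\phi(\mathcal{M}_{(h;-;k)}^{K})$, and the cleanest route is to exhibit a continuous left inverse. Given a point $[DK]$ of the image together with its $\mathcal{N}$-structure, I would recover $K^{+}=DK/\langle st\rangle$ as the quotient by the fixed point free conformal involution, recover $\sigma$ as the involution induced on $K^{+}$ by $s$ (legitimate because $s$ and $st$ commute in $C_{2}\times C_{2}$, so $s$ descends to $DK/\langle st\rangle$), and then set $K=K^{+}/\langle\sigma\rangle$. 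If this assignment is well defined and continuous, then $\psi\circ\phi$ is a continuous injection admitting a continuous inverse on its image, hence an orbifold embedding.

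The main obstacle is precisely the well-definedness of this left inverse, i.e.\ injectivity: the moduli point $[DK]$ forgets the marking, and $DK$ may carry extra automorphisms (as Example 8.2 shows), so a priori the conformal class does not single out the group $\langle s,t\rangle$ or its quotients. The heart of the argument is therefore to show that, after restriction to the single topological type $\phi(\mathcal{M}_{(h;-;k)}^{K})$, the fixed point free conformal involution $st$ and the descended anticonformal involution $\sigma$ are determined up to conjugacy in $\mathrm{Aut}^{\pm}(DK)$ by the conformal type of $DK$ together with the prescribed type $(-,k)$ of $\sigma$. This is exactly the rigidity encoded by the monodromy $\langle[Fix(\sigma)],\cdot\rangle$ of Theorem 5: it forces the recovered cover $DK\to K^{+}$, hence the pair $(K^{+},\sigma)$, hence the point $\phi(K)$, to be well defined up to isomorphism. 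I expect controlling the possible $C_{2}\times C_{2}$ subgroups that realize $DK$ as a double of doubles, and matching them along the given locus, to be the delicate point; once it is settled, injectivity follows, the homeomorphism onto the image follows from continuity of the left inverse, and the embedding is established for each $k=0,\ldots,g$.
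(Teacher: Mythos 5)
Your construction of $\psi$ is the paper's: send $K$ (non-orientable, algebraic genus $g$, so $DK$ has genus $2g-1$) to the class of its double of doubles, and place the image in $\mathcal{N}_g^{\mathbb{R},-}$ via the fixed point free involutions $st$ (conformal) and $s$ (anticonformal) from the Corollary in section 5. Your continuity argument is also in substance the paper's: $\ker\omega$ is a fixed finite-index subgroup of the uniformizing group $\Gamma$, and the inclusion $\ker\omega\hookrightarrow\Gamma$ induces an isometric embedding $T_{(g,k,-)}\rightarrow T_{2g-1}$ of Teichm\"uller spaces (Corollary 8.9 of \cite{MS}), which descends to a continuous map on moduli. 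Up to this point the proposal is sound and parallels the paper.

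The gap is in the embedding claim, and you have in effect flagged it yourself. Your plan is to build a continuous left inverse by recovering $K^{+}=DK/\langle st\rangle$ and the descended involution $\sigma$ from the bare moduli point, and you correctly observe that this requires the relevant involutions (equivalently the $C_2\times C_2$ subgroup) to be determined up to conjugacy in $\mathrm{Aut}^{\pm}(DK)$ by the conformal type of $DK$ and the topological type $(-,k)$. But you never prove this rigidity --- you write \textquotedblleft I expect \dots\ to be the delicate point; once it is settled, injectivity follows\textquotedblright, and that deferred claim \emph{is} the injectivity statement, so the argument is circular at its heart. It is also genuinely problematic pointwise: at loci where $DK$ carries extra automorphisms (such loci exist, cf.\ Example 8.2, where $\#\mathrm{Aut}(DP2)=192$), the conformal class of $DK$ need not single out the subgroup $\langle s,t\rangle$ or its quotients, which is precisely why the statement is only an \emph{orbifold} embedding. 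The paper sidesteps pointwise recovery entirely by staying at the Teichm\"uller level: the isometric embedding $\varphi:T_{(g,k,-)}\rightarrow T_{2g-1}$ coming from the fixed subgroup inclusion descends, by the same Macbeath--Singerman machinery already invoked for the embedding property of $\phi$ (Corollary 8.9 of \cite{MS}, which controls the modular-group stabilizer of the embedded copy), to an orbifold embedding on each stratum. To complete your route you would have to prove the conjugacy-rigidity of the $C_2\times C_2$ action along the stratum, which is the hard content and is not obviously true at symmetric points; the Teichm\"uller-theoretic argument is the correct replacement.
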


\begin{proof}
A point in $\mathcal{M}_{g}^{\mathbb{R},-}$ may be represented by a
non-orientable Klein surface $K$. Let $\psi(K)$ be the point in $\mathcal{M}%
_{2g-1}$ given by $DK$. In the case $g>1$, $K$ is uniformized by a surface NEC
group $\Gamma$ and then the surface $\psi(K)$ is uniformized by the subgroup
$\ker\omega$, where $\omega:\Gamma\longrightarrow C_{2}\times C_{2}$ is the
epimorphism defined in the proof of Theorem \ref{Main}. If $T_{(g,k,-)}$ and
$T_{2g-1}$ are respectively the Teichm\"{u}ller spaces of Klein surfaces with
topological type $(g,k,-)$ and of Riemann surfaces of genus $2g-1$, then the
inclusion $\ker\omega\rightarrow\Gamma$ produces an isometric embedding from
$\varphi:T_{(g,k,-)}\rightarrow T_{2g-1}$ (Corollary 8.9 of \cite{MS}). The
map $\varphi$ produces the continuous map $\psi:\mathcal{M}_{g}^{\mathbb{R}%
,-}\rightarrow\mathcal{M}_{2g-1}$ sending each Klein surface to its double of
doubles $DK$. Since the double of doubles admits the action of a free
orientation preserving involution (producing as orbit space $K^{+}$) and a
free anticonformal involution (producing $SK$) we have that $\psi
(\mathcal{M}_{g}^{\mathbb{R},-})\subset\mathcal{N}_{g}^{\mathbb{R},-}$.
\end{proof}

Next result follows from the above together with Theorems 3.1 and 3.3 of
\cite{CI}

\begin{corollary}
$\psi(\mathcal{M}_{g}^{\mathbb{R},-})$ is connected.
\end{corollary}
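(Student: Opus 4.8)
The plan is to reduce the assertion to the connectivity theorems of \cite{CI} by isolating the one feature of $DK$ that is common to every topological type of $K$. First I would represent a point of $\mathcal{M}_g^{\mathbb{R},-}$ by a non-orientable bordered Klein surface $K$ of type $(h;-;k)$ with $h+k-1=g$, so that by the preceding theorem $\psi(K)=DK$ is a Riemann surface of genus $2g-1$ carrying the group $\langle s,t\rangle\cong C_2\times C_2$ of Theorem \ref{Main}. The decisive point is that the types of these involutions do not depend on the pair $(h,k)$: the involution $s$ is a fixed point free anticonformal involution whose quotient $SK=DK/\langle s\rangle$ is closed and non-orientable, so $s$ has type $(-,0)$, while $st$ is a fixed point free conformal involution with orientable quotient $X^+=DK/\langle st\rangle$. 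Consequently every image point lies in $\mathcal{M}^{(+,0)}\cap\mathcal{M}^{(-,0)}=\mathcal{N}_g^{\mathbb{R},-}$, and in particular the whole image is contained in the single locus $\mathcal{M}^{(-,0)}=\mathcal{M}_{2g-1}^{(-,0)}$ of genus $2g-1$ surfaces carrying a fixed point free anticonformal involution.

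Next I would emphasize that connectivity cannot be inherited from the source. The domain splits as the \emph{disjoint} union $\mathcal{M}_g^{\mathbb{R},-}=\bigcup_{h+k-1=g}\mathcal{M}_{(h;-;k)}^K$ of the connected strata $\mathcal{M}_{(h;-;k)}^K$, one for each admissible $k\in\{1,\dots,g\}$; since $\psi$ is continuous, each $\psi(\mathcal{M}_{(h;-;k)}^K)$ is connected, and the content of the corollary is exactly that these finitely many pieces merge inside $\mathcal{N}_g^{\mathbb{R},-}$. This is where Theorems 3.1 and 3.3 of \cite{CI} are invoked: Theorem 3.1 supplies the connectivity of the locus of genus $2g-1$ surfaces admitting a fixed point free anticonformal involution with non-orientable quotient, while Theorem 3.3 describes how the neighbouring topological types of such involutions are joined to one another. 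Combining the two I would realize $\psi(\mathcal{M}_g^{\mathbb{R},-})$ as a connected subset of $\mathcal{M}_{2g-1}^{(-,0)}$.

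I expect the main obstacle to be precisely this last gluing step: showing that the pieces $\psi(\mathcal{M}_{(h;-;k)}^K)$ indexed by distinct values of $k$ are not topologically separated in $\mathcal{N}_g^{\mathbb{R},-}$, even though they come from disjoint components of the source. The mechanism I would use, following \cite{CI}, is that two topological types of anticonformal involution meet along more symmetric surfaces carrying several such involutions, so that the number $k$ of boundary components of $K$ (equivalently, the number of fixed curves of the Schottky involution descending to $SK$) can change as one passes through such a surface while remaining inside the common ambient locus $\mathcal{M}_{2g-1}^{(-,0)}$. This yields a chain of intersections linking all the pieces, whence $\psi(\mathcal{M}_g^{\mathbb{R},-})$ is a finite union of connected sets whose intersection graph is connected, and is therefore connected, which is the claim.
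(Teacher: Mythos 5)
Your overall strategy is the same as the paper's: the published proof is a single sentence deducing the corollary from the preceding theorem together with Theorems 3.1 and 3.3 of \cite{CI}, which are exactly the ingredients you invoke. Your first two paragraphs --- containment of the image in $\mathcal{N}_{g}^{\mathbb{R},-}$, connectedness of each piece $\psi(\mathcal{M}_{(h;-;k)}^{K})$ as the continuous image of a connected stratum, and the reduction of the corollary to making the finitely many pieces meet --- are correct and are a faithful expansion of what the paper leaves implicit.

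The gap is in the gluing step, and it is genuine. Your stated mechanism, that the number $k$ ``can change as one passes through such a surface while remaining inside the common ambient locus $\mathcal{M}_{2g-1}^{(-,0)}$'', proves nothing: a subset of a connected locus need not be connected, as you yourself observe for the domain. What must be produced is an actual common point of consecutive pieces, i.e.\ bordered Klein surfaces $K_{1}$ of type $(h;-;k)$ and $K_{2}$ of type $(h-1;-;k+1)$ with $DK_{1}\cong DK_{2}$ as points of $\mathcal{M}_{2g-1}$. Theorems 3.1 and 3.3 of \cite{CI} supply a genus $g$ Riemann surface $X$ admitting anticonformal involutions $\sigma_{1},\sigma_{2}$ of both topological types, hence $K_{1}^{+}=K_{2}^{+}=X$; but this does not yet give $DK_{1}=DK_{2}$, because by Theorem 5 the cover $DK_{i}\rightarrow X$ has monodromy $\left\langle [Fix(\sigma_{i})],\cdot\right\rangle $ and therefore depends on the class $[Fix(\sigma_{i})]\in H_{1}(X,\mathbb{Z}_{2})$ and not on $X$ alone; for involutions of different topological types these classes are in general distinct. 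So the missing step --- the real content hidden behind the citation --- is to verify, within the explicit constructions of \cite{CI}, that the two index-two covers, together with their $C_{2}\times C_{2}$ actions, determine the same point of $\mathcal{M}_{2g-1}$ (for instance by exhibiting an isomorphism identifying the corresponding inclusions $\ker\omega_{i}\subset\Gamma_{i}$ of NEC groups); your proposal asserts the intersections without this verification. A secondary point you should also address: for $k=0$ the set $C$ of canonical reflections is empty, $\omega$ is not surjective and Theorem \ref{Main} does not apply, so the stratum $\mathcal{M}_{(g+1;-;0)}^{K}$ requires a separate definition of $\psi$ before it can be attached to the chain.
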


Antonio F. Costa, Departamento de Matem\'{a}ticas Fundamentales, UNED, Madrid
28040, Spain

acosta@mat.uned.es

Paola Cristofori, Dipartimento di Scienze Fisiche, Informatiche, Matematiche,
Universit\`{a} degli studi di Modena e Reggio Emilia, Via Campi 213/A - 41125
Modena, Italy

paola.cristofori@unimore.it

Ana M. Porto, Departamento de Matem\'{a}ticas Fundamentales, UNED, Madrid
28040, Spain

asilva@mat.uned.es

\end{document}